\newtheorem{theorem}{Theorem}[section]
\newtheorem{proposition}[theorem]{Proposition}
\newtheorem{lemma}[theorem]{Lemma}
\theoremstyle{definition}
\newtheorem{definition}[theorem]{Definition}
\theoremstyle{remark}
\numberwithin{equation}{section}
\def\NN{\mathbb{N}}
\def\RR{\mathbb{R}}
\def\TT{\mathbb{T}}
\renewcommand\SS{\mathbb{S}}
\def\QQ{\mathbb{Q}}
\newcommand\minus\backslash
\newcommand\lan\langle
\newcommand\ran\rangle
\DeclareMathOperator\Div{div}
\renewcommand\leq\leqslant
\renewcommand\geq\geqslant
\newlength{\intwidth}
 \DeclareMathOperator\curl{curl}
\begin{document}

\title{Periodic orbits of analytic Euler fields on 3-manifolds}

\author{Francisco Torres de Lizaur}
\address{Max Planck Institute for Mathematics, Vivatsgasse 7, 53111 Bonn, Germany}
\email{ftorresdelizaur@mpim-bonn.mpg.de}

\begin{abstract}

On any closed Riemannian 3-manifold which is not a torus bundle, every nonvanishing analytic solution of the stationary Euler equations has a periodic trajectory. This result is originally due to A. Rechtman \cite{RE} and K. Cieliebak and E. Volkov \cite{CV}; here we present an alternative proof of it.

\end{abstract}

\maketitle

\section{Introduction}
The motion of an incompressible inviscid fluid in a Riemannian 3-manifold $(M, g)$ is described by a time-dependent vector field $X$ satisfying the \emph{Euler equations}
\[
\frac{\partial X}{\partial t}+\nabla^{LC}_{X}X=-\nabla P ,\,\,\,\, \Div X=0 \,.
\]

Here $\Div$, $\nabla$, and $\nabla^{LC}$ denote the divergence, gradient and covariant derivative on $M$, defined with the metric $g$; and $P$ is
a (time-dependent) function called the pressure, which is also an unknown in the equations.

Solutions to the above equations that do not depend on time are called stationary, or steady, Euler flows. They describe the equilibrium configurations of the fluid. It is straightforward to see that in this case the Euler equations are equivalent to
\begin{equation*}
X\times \curl X=\nabla B\,, \;\; \Div X=0\,,
\end{equation*}
where $B:=P+\frac12|X|^2$ is called the Bernoulli function.

This paper is concerned with the qualitative properties of steady Euler flows and, more precisely, with the existence of closed trajectories amongst the flow lines.

A lot of aspects of steady Euler flows are not yet understood; nonetheless, the study of some of their qualitative properties has proven to be fruitful, revealing many connections with other areas of mathematics (see e.g  \cite{Kh} and \cite{P} for reviews). Arnold's structure theorem~\cite{AKh} is the first major result in this regard. Arnold discovered that, when the Bernoulli $B$ function is analytic, the flow lines outside its critical level sets  are arranged in a very simple manner: as 1-dimensional families of 2-dimensional integrable Hamiltonian systems. 

Completely opposite to the situation covered by Arnold's theorem, we have those steady Euler flows whose Bernoulli function has all level sets critical, i.e those having $ B\equiv $ constant. These solutions are often called Beltrami fields. 

Arnold conjectured that Beltrami fields should have very complex flow line arrangements, and since then they have attracted a lot of attention \cite{H, DH, EP, EPT}. Interestingly, contact topology has turned out to be a very useful tool for their study, at least in the absence of zeroes in the vector fields. The reason is that nonvanishing Beltrami fields are actually Reeb flows, and vice versa. More precisely, nonvanishing Beltrami fields satisfy the relation $\curl X=f X$ for some well-defined real-valued function $f$; when this function is constant, they are equivalent (after suitable rescaling) to Reeb flows of contact structures, an idea put forward by Sullivan and developed by Etnyre and Ghrist \cite{EG1}. When $f$ is not constant, the correspondence is rather with Reeb flows of stable Hamiltonian structures (also known as volume-preserving geodesible flows), as shown by Rechtman~\cite{RE2,RE}.

It is by means of this correspondence that we know that, on any closed (i.e compact without boundary) 3-manifold which is not a torus bundle, a $C^2$ nonvanishing Beltrami field always has periodic orbits (in a torus bundle it is straightforward to find examples of Beltrami fields without periodic orbits). Indeed this statement, expressed in terms of Reeb flows of stable Hamiltonian structures, was a major recent breakthrough in contact topology due to Hutchings and Taubes \cite{HT}, and independently to Rechtman \cite{RE} in the case of the 3-sphere and in manifolds with non-trivial second homotopy group.

The question of whether, on closed 3-manifolds which are not torus bundles, a general nonvanishing steady Euler field  has a periodic orbit, remains completely open. An a priori promising approach to this problem would be to show that any nonvanishing steady Euler field $X$ is \emph{beltramisable}, i.e that it satisfies $\curl X= f X$ for \emph{some metric} (note that the curl operator depends on the metric). However, this is known to be false in general: Cieliebak and Volkov provided in ~\cite{CV} a construction of smooth solutions to the stationary Euler equations that are not beltramisable.

A very different picture appears if we require the solutions to be analytic, as in Arnold's original set-up. In this case Cieliebak and Volkov show that all nonvanishing steady Euler flows are indeed Beltrami fields for some metric. This implies, in view of the above mentioned Hutchings--Taubes' theorem, the following result (\cite{CV}, Corollary 3), which is also the main theorem that we prove in this paper:

\begin{theorem}\label{main1}
Let $M$ be a closed real analytic
3-manifold equipped with a metric $g$ and a volume form $\mu$ (which we do not assume to be the one induced by $g$). Let $X$ be a nonvanishing vector field $X$ satisfying the stationary Euler equations,
\[
X\times \curl X=\nabla B\,, \;\; \Div X=0\,.
\]
Suppose that $X$, $g$, $\mu$ and $B$ are analytic. Then, if $M$ is not a torus bundle over the circle,
$X$ has a periodic orbit.
\end{theorem}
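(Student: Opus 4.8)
The plan is to deduce Theorem~\ref{main1} from the theorem of Hutchings and Taubes \cite{HT} recalled above, by showing that the analyticity hypotheses force $X$ to be \emph{beltramisable}, in the strong form that there is a smooth $1$-form $\lambda$ on $M$ with $\lambda(X)>0$ everywhere and $\iota_X\,d\lambda=0$; equivalently, any metric $g'$ with $g'(X,\cdot)=\lambda$ satisfies $\curl_{g'}X=fX$. Granting such a $\lambda$, the conclusion follows at once. Since $\Div X=0$, the flow of $X$ preserves $\mu$, so $\iota_X\mu$ is a closed $2$-form with $\ker(\iota_X\mu)=\RR X$, and $(\lambda,\iota_X\mu)$ is a stable Hamiltonian structure: the only nontrivial requirement, $\ker(\iota_X\mu)\subset\ker d\lambda$, is exactly $\iota_X\,d\lambda=0$, while $\lambda\wedge\iota_X\mu=\lambda(X)\,\mu$ is a volume form because $\lambda(X)>0$. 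Its Reeb vector field is $X/\lambda(X)$, a positive reparametrisation of $X$ with the same oriented orbits; as $M$ is not a torus bundle over the circle, Hutchings and Taubes provide it with a periodic orbit, and hence $X$ has one as well.

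Everything thus reduces to producing $\lambda$. A short computation with the Euler equation shows that $\alpha:=g(X,\cdot)$ satisfies $\iota_X\,d\alpha=-h\,dB$, where $h>0$ is the density of $\mu$ with respect to the Riemannian volume of $g$, while $\alpha(X)=|X|_g^2>0$. Hence $\alpha$ is itself a valid choice of $\lambda$ when $B$ is constant — the case in which $\curl X=fX$ for the given metric — and otherwise it must be corrected. Assume then that $B$ is non-constant. By real analyticity $B$ has finitely many critical values, and on the complement $M'$ of the (finitely many) critical level sets Arnold's structure theorem \cite{AKh} applies: each connected component of $M'$ is a block fibred by the level surfaces of $B$, which are $2$-tori invariant under both $X$ and $\curl X$ and on which $X$ restricts to a linear flow. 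On such a block, if the rotation number of the linear flow is rational on some fibre, that torus is foliated by periodic orbits and the theorem is proved; otherwise the rotation number varies continuously while avoiding every rational, hence is a constant irrational, so in Arnold's action--angle coordinates $(\theta_1,\theta_2,c)$ one has $X=\psi(c)\,(v_1\partial_{\theta_1}+v_2\partial_{\theta_2})$ with $v$ a fixed nonzero vector and $\psi>0$, and the closed $1$-form $\lambda=w_1\,d\theta_1+w_2\,d\theta_2$ with $w\cdot v>0$ satisfies $\lambda(X)>0$ and $\iota_X\,d\lambda=0$ on the block.

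It remains to combine these local forms into one $\lambda$ defined on all of $M$, still with $\lambda(X)>0$ and $\iota_X\,d\lambda=0$. Across a torus shared by two blocks the direction data extend continuously and the constants can be matched there, while interpolating by a partition of unity that is constant along $X$ — for instance one subordinate to the values of $B$ — preserves the condition $\iota_X\,d\lambda=0$. The real obstacle, and I expect the technical core of the proof, is to extend the construction across the critical level sets of $B$, the difficulty being concentrated near the critical set $C=\{dB=0\}$: one must identify the local model of $X$, of $\curl X$ and of the nearby level surfaces of $B$ along each stratum of $C$ — in particular where the topology of the fibres changes — and there build a $1$-form with $\lambda(X)>0$ and $\iota_X\,d\lambda=0$ compatible with the one already defined on the adjoining blocks. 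This is precisely where real analyticity cannot be dispensed with: it bounds the combinatorial complexity of $C$, provides normal forms near it, and controls how fast $X$ and $\curl X$ become collinear as $C$ is approached — without which neither the local models nor the final global verification of $\iota_X\,d\lambda=0$ would be available.
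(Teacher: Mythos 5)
Your reduction in the case where $B$ is constant is sound: there $\alpha=g(X,\cdot)$ satisfies $\iota_X d\alpha=0$ and $\alpha(X)=|X|_g^2>0$, so $(\alpha,\iota_X\mu)$ is a stable Hamiltonian structure whose Reeb field is a positive reparametrisation of $X$, and Hutchings--Taubes applies. (The paper treats this case by splitting it further: $\curl X=0$ is handled by Tischler's theorem plus a Lefschetz/Fuller argument, $\curl X=\lambda X$ with $\lambda$ a nonzero constant by Taubes' Weinstein conjecture, and $\curl X=fX$ with $f$ nonconstant via the first integral $f$; your single appeal to \cite{HT} is a legitimate shortcut for all three.) A minor point: with the paper's conventions $i_{X\times Y}g=i_Yi_X\mu$ and $i_{\curl X}\mu=d\alpha$, one gets $\iota_Xd\alpha=-dB$ exactly, with no density factor $h$; this does not affect anything.

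The case $\nabla B\not\equiv 0$, however, contains a genuine gap, located exactly where you say you ``expect the technical core'' to be: the construction of $\lambda$ with $\lambda(X)>0$ and $\iota_Xd\lambda=0$ across the critical level sets of $B$. You describe what would have to be done there but do not do it, and this step is not routine --- it is essentially the content of Cieliebak and Volkov's beltramization theorem \cite{CV}, whose proof occupies most of their paper and hinges on a delicate analysis of the singular levels. Some of the auxiliary claims on the regular part also need more care (continuity of the rotation number across the Liouville tori; the fact that two adjacent blocks never share a regular torus, so that \emph{all} of the gluing difficulty sits at the critical levels you have not treated). The paper avoids this difficulty entirely by a different route: since $B$ (or $f$) is a nonconstant analytic first integral of $X$, it proves directly (Theorem \ref{main2}) that in the absence of periodic orbits every connected component of every level set --- critical levels included, analysed via the \L ojasiewicz stratification of analytic sets rather than via normal forms for $X$ and $\curl X$ --- is an invariant torus, and that the resulting foliation exhibits $M$ as a torus bundle over the circle, contradicting the hypothesis. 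To complete your argument you would either have to carry out the beltramization near the critical set of $B$ (essentially reproving the main theorem of \cite{CV}) or replace that part of your proof by a first-integral argument of the paper's type.
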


We recall that given a metric $g$ and a volume form $\mu$ (not necessarily the one induced by $g$), the divergence, curl and vector product operators are defined by
\[
i_{\curl X} \mu= d(i_X g) \,,\,\, i_{X\times Y} g=i_{Y} i_X \mu \,,\,\, L_X \mu= \Div X \mu \,.
\]

The crucial step in the proof of Theorem \ref{main1} that we present is to show the following:

\begin{theorem}\label{main2}

Let $X$ be a nonvanishing vector field in a real analytic closed 3-manifold which is not a torus bundle. If $X$ has a non-trivial analytic first integral, then it has a periodic orbit.

\end{theorem}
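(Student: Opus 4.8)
The plan is to prove the contrapositive: assuming the nonvanishing field $X$ has \emph{no} periodic orbit, I would show that $M$ must be a torus bundle over $S^1$. Denote by $f$ the non-trivial analytic first integral, so $L_Xf=0$ and $f$ is non-constant; since $M$ is compact and everything is analytic, $f$ has only finitely many critical values and $\mathrm{Crit}(f):=\{df=0\}$ is a compact real-analytic subset of $M$. The remark that drives the whole argument is that $\mathrm{Crit}(f)$ is invariant under the flow $\phi_t$ of $X$: differentiating $L_Xf=0$ gives $L_X(df)=d(L_Xf)=0$, hence $\phi_t^*\,df=df$ and $df_p=0\iff df_{\phi_t(p)}=0$. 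Applied also to auxiliary analytic objects built from $f$, this yields a principle I will invoke several times, call it $(\star)$: \emph{any $X$-invariant real-analytic subset of $M$ of dimension $\le 1$ is either empty or contains a periodic orbit.} Indeed, its singular locus is a discrete, hence finite, $X$-invariant set, which would have to consist of zeros of $X$ and so is empty; therefore the set is a smooth compact $1$-manifold, a finite disjoint union of circles, each an orbit of the nowhere-zero $X$ and thus periodic.

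First I would pin down the shape of $\mathrm{Crit}(f)$. It has no zero-dimensional irreducible component (such a component would be an $X$-invariant point, i.e.\ a zero of $X$), and by $(\star)$ no one-dimensional component either; so $\mathrm{Crit}(f)$ is the union of its two-dimensional irreducible components. Each of those has an $X$-invariant singular locus of dimension $\le 1$, necessarily empty by $(\star)$, so it is a smooth closed embedded surface; two distinct ones meet in an $X$-invariant analytic set of dimension $\le 1$, hence are disjoint; and each carries the nowhere-zero $X$, so has zero Euler characteristic and, not being a Klein bottle (every nonvanishing field on the Klein bottle has a periodic orbit), is a $2$-torus. Thus $\mathrm{Crit}(f)=\Sigma_1\sqcup\cdots\sqcup\Sigma_m$ is a finite disjoint union of embedded tori, nonempty since it contains $f^{-1}(\max f)$, with $f\equiv c_i$ on $\Sigma_i$.

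Next I would produce product structures. On $M\setminus\mathrm{Crit}(f)$ the map $f$ is a proper submersion over each open interval of regular values, so by Ehresmann it is a trivial bundle there; its fibres, being surfaces carrying a nowhere-zero field, are again finite unions of $2$-tori. Near $\Sigma_i$, choose a real-analytic tubular neighbourhood $\Sigma_i\times(-\epsilon,\epsilon)$ with fibre coordinate $s$ and expand $f-c_i=\sum_{k\ge 2}a_k(x)\,s^k$ with $a_k$ analytic on $\Sigma_i$ (the linear term vanishes because $\Sigma_i\subset\mathrm{Crit}(f)$). Let $j$ be minimal with $a_j\not\equiv 0$. The proper analytic subset $\{a_j=0\}\subset\Sigma_i$ is $X$-invariant, since ``$f-c_i$ vanishes to order $>j$ along every transversal at $p$'' is a flow-invariant condition; by $(\star)$ it is empty, so $a_j$ is nowhere zero and an analytic change of fibre coordinate brings $f-c_i$ to the form $\pm u^{\,j}$. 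Consequently the level sets of $f$ near $\Sigma_i$ are embedded tori parallel to $\Sigma_i$, and a one-sided collar of $\Sigma_i$ is diffeomorphic to $\Sigma_i\times[0,\epsilon)$ with $f$ equal to the collar coordinate.

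Finally I would assemble $M$. Cutting it along $\Sigma_1\sqcup\cdots\sqcup\Sigma_m$ yields a compact manifold $\widehat M$ with torus boundary on which the induced function has no interior critical points and the collar normal form above at the boundary; hence each connected component $C$ fibres trivially over a compact interval, and $C\cong T^2\times[0,1]$ because a connected total space forces a connected fibre. Therefore $M$ is glued from finitely many copies of $T^2\times[0,1]$ along their boundary tori, each copy contributing exactly two of them, so the dual graph of this decomposition has all vertices of valence $2$ and, $M$ being connected, is a single cycle; a cyclic gluing of copies of $T^2\times[0,1]$ along their torus ends is a torus bundle over $S^1$, contradicting the hypothesis. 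Hence $X$ has a periodic orbit. The step that requires real care — and the only place analyticity genuinely enters beyond the finiteness of critical values — is the analysis near the critical tori in the third paragraph: principle $(\star)$ is exactly what rules out the degeneracies of $f$ there (and, in the non-orientable case, the one-sided tori whose adjacent pieces are twisted $I$-bundles, handled by the same valence count).
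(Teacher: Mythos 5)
Your proposal is correct, and its skeleton is the paper's: with no zeros and no periodic orbits, flow--invariance of analytic sets kills every stratum of dimension $\le 1$, the invariant level components are tori, the first integral has a normal form transverse to each critical torus, and $M$ is assembled from products $\TT^2\times[0,1]$. The genuine differences lie in the two technical cores. First, your normal form near a critical torus --- the fibrewise expansion $f-c_i=\sum_{k\ge2}a_k(x)s^k$ in an analytic tubular neighbourhood, with $\{a_j=0\}$ emptied by your principle $(\star)$ --- replaces the paper's longer chain (Whitney approximation of a defining function, Thom's isotopy theorem, Royden's analytic isotopy, Weierstrass preparation, partition of unity) and yields an analytic normal form $\pm u^{\,j}$ rather than the paper's $h=g^k\cdot u$ with $u$ merely smooth; your flow--invariance of the transversal vanishing order is exactly the paper's Lemma \ref{vanishingorder} in different clothing. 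The price is that you must justify the analytic tubular neighbourhood (an analytic metric, via Grauert's embedding theorem, does it). Second, your assembly by cutting along $\mathrm{Crit}(f)$ and a valence count on the dual graph replaces the paper's ``the leaf space is a circle, then Ehresmann''; note that ``each piece fibres over a compact interval'' does not follow from the absence of interior critical points alone --- you still need the leaf-space (equivalently, monotonicity of $f$ in the transverse parameter) argument to see that the boundary value $c_i$ is an endpoint of the range of $f$ on the piece. Two caveats, the second shared with the paper: global irreducible components of a real-analytic set behave badly (non-coherence; the Whitney umbrella), so it is safer to stratify $\mathrm{Crit}(f)$ as the paper does via \L ojasiewicz and conclude directly that it is a smooth closed surface; and both arguments tacitly assume the invariant tori are two-sided (the paper simply asserts trivial normal bundle) --- in a non-orientable $M$ a one-sided critical torus would produce twisted $I$-bundle pieces and a torus \emph{semi}-bundle, and your closing parenthetical does not actually dispose of that case, so it deserves a real argument.
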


The proof of Theorem \ref{main1} is then achieved by combining Theorem \ref{main2} with Taubes' solution to the Weinstein conjecture in dimension three \cite{TA} and Fuller's theorem \cite{FU} on the existence of periodic points of surface homemorphisms.

\subsection*{Comparison with previous results} As we mentioned above, neither Theorem \ref{main1} nor Theorem \ref{main2} are original. The strategy of the proof that we follow here isn't either: it already appears in the works of Etnyre and Ghrist \cite{EG1}, Rechtman \cite{RE2} and Cieliebak and Volkov \cite{CV}; the difference lies in the details of its implementation. 

More precisely, Etnyre and Ghrist proved in \cite{EG1} that, in the 3-sphere, a nonvanishing analytic Euler field always has a periodic orbit, and in fact the only thing preventing their arguments from yielding Theorem \ref{main1} was that the Weinstein conjecture in dimension three had not been solved yet. Rechtman's arguments in \cite{RE} also provide a proof of Theorem \ref{main1} along the same lines as the one presented here. Finally, as we have mentioned already, Theorem \ref{main1} is a corollary (\cite{CV}, Corollary 3) of Cieliebak and Volkov's beltramization result, and another proof following the same strategy as we do is presented in Remark 2.2 of their paper.

With regards to Theorem \ref{main2}, there are several results in the same spirit originally discovered by Fomenko in his work on Hamiltonian systems with Morse--Bott first integrals (\cite{FO}, Chapter 2); besides, it follows directly from the arguments in the proofs of Theorem 3.5 in \cite{EG1}, of Lemma 4.2 in \cite{RE} and of Remark 2.2 in \cite{CV}. The key in all of these arguments is to use the stratified geometry of the critical level sets of the first integral to show that, in the absence of periodic orbits, the manifold is foliated by tori; we proceed in a similar manner. Where our approach differs considerably is in the proof of the fact that $M$ does not just admit a foliation by tori, but is a torus bundle over the circle.

The paper is organized as follows: firstly we prove Theorem \ref{main2} in Section \ref{pmain2}, then Theorem \ref{main1} is proven in Section \ref{pmain1}.

\section{Proof of the Theorem \ref{main2}}\label{pmain2}

Let us denote by $h$ the analytic non-trivial first integral of the vector field $X$. We recall that being a first integral of $X$ means that $h$ is constant along the flow lines; in particular, since $h$ is $C^1$, we have $dh(X)=0$.

The idea of the proof is to first show that, if  $X$ has no periodic orbits, the level sets of $h$ define a foliation of $M$ where all leaves are tori; then we prove that this foliation yields actually a torus bundle over the circle.

\subsection{Step 1: the connected components of the level sets of $h$ are all tori}

Let $L$ be a connected component of the level set $h^{-1}(c)$, for some real number $c$ in the image of $h$.

If $c$ is a regular value, then $L$ is an oriented surface, which moreover is invariant under the flow of $X$; since $X$ has no zeroes, $L$ must be a torus.

If $c$ is a critical value, all we know in principle is that $L$ is a connected analytic subset of $M$. \L ojasiewicz proved \cite{LO} that these
sets have a controlled stratified structure. We begin by recalling some of their properties.

\begin{definition}
Let $L$ be an analytic set on a manifold $M$. We say that the rank of $L$
at a point $x\in L$ is $k$, $\text{rank}_L(x)=k$,  if there is an open neighborhood $U$
of $x$ in $M$ and analytic functions $f_{1},...f_{k}$ on $U$ such that
\[
U\cap L=\{f_{1}^{-1}(0)\cap...\cap f_{k}^{-1}(0),\,\text{ and } d_{x}f_{1},...d_{x}f_{k} \text{ are linearly independent}\}
\]

\end{definition}

Thus, if $\text{rank}_{L}(x)=k$, the set $L$ is a submanifold
of codimension $k$ in a neighborhood of $x$. 

We now define:
\[
L^{k}=\{x \in L,\,\text{rank}_L(x)=k\}
\]

The subset $L^{0}$ is  called the \emph{singular set} of $L$. We now summarize some standard geometric properties of analytic sets in connected real analytic manifolds that will be important for our purposes (see e.g \cite{BW}, Section 2 of \cite{LM}, or Theorem 6.3.3 in \cite{KP}):

\begin{enumerate}

\item The singular set $L^{0}$ is an analytic subset of $M$. If $L=L^{0}$,    then $L=M$; if $L \neq L^{0}$, then $L^0$ is a closed and nowhere dense
subset of $L$, and consists of the points at the intersection of the sets $\overline{L^{k}}$ with
$k>0$. 

\item For any point $x \in L^{0}$ we define rank$^{1}(x):=$ rank$_{L_0}(x)$, the rank of $L^0$ at the point $x$. We can then define analogously the sets $(L^{0})^{k}$; if $x$ is still on the singular set of $L^{0}$, i.e $x \in (L^{0})^{0}$, then we can define rank$^{2}(x)$
as the rank of $(L^{0})^{0}$ at $x$, and so on. This process eventually terminates, i.e, $(((L^{0})^{0})^{...})^{0}=\emptyset$.

\item If the dimension of the manifold is $n$ and $L \neq M$, we can define a finite filtration by closed subsets $L=L_{n-1}...\supset L_{0}$
(here $L_{k}\neq L^{k}$) such that for any $j$, $L_{j}\setminus L_{j-1}$
consists of the points $x$ with rank$_L(x)=n-j$ and of the points $x \in L^{0}$ with rank$^{i}(x)=n-j$ for some $i>0$. In other words, $L_{j}\setminus L_{j-1}$ is either a submanifold of dimension
$j$ or it is empty. We call the connected components of $L_j \setminus L_{j-1}$ the $j$-dimensional strata of $L$.

\end{enumerate}

Take now $L$ to be the connected component of the critical level set $h^{-1}(c)$; our first claim is that the strata $L_{k}\setminus L_{k-1}$ of dimension $k$ are invariant under the flow of $X$.

Indeed, consider the subsets $L^{k} \subset L$ of points with rank$_L= k$. For any $k>0$, it is clear from the definition of rank and the invariance of $L$ that $\phi^{t}_{X}(L^{k}) \subset L^{k}$ (here by $\phi^{t}_X$ we denote the flow of $X$). 

Now the singular set $L^{0}$ is clearly invariant since it is the complement in $L$ of the union of the sets $L^{k}$. Then again, by the definition of rank and the invariance of $L^{0}$, the submanifolds $(L^{0})^{k}$ consisting of the points in $L^{0}$ with rank$^{1}(x)=k$ are preserved by the flow. 

Arguing repeatedly as above, we conclude that the set of points $x$ with rank$^{j}(x)=k$, for any $j$ and $k$, is invariant by the flow. Since $L_{k}\setminus L_{k-1}$ is a union of such sets, it is also invariant.

We next proceed to analyze the geometry of the strata $L_{k} \setminus L_{k-1}$. First, observe that $L$ cannot have 3-dimensional strata, because then $h$ would be a constant, which is excluded. There cannot be 0-dimensional strata either, since by invariance these would be zeroes of $X$. The absence of 0-dimensional strata implies that any 1-dimensional stratum must be a closed curve, i.e a periodic orbit of $X$, which is also excluded. 

So there can only be 2-dimensional strata: $L$ must be a surface embedded in $M$, with the vector field $X$ tangent to it. Since $L$ is a critical level set, we cannot ensure its orientability: from the absence of zeroes of $X$ we can only conclude that $L$ is either a torus or a Klein bottle. However, a vector field tangent to a Klein bottle always has a periodic orbit \cite{GO}, so $L$ must be indeed a torus.

\subsection{Step 2: the foliation defined by the level sets of $h$ is locally trivial}

To finish our proof of Theorem \ref{main2} we need the following:

\begin{proposition}\label{levelsets2}
Let $L$ be a connected component of the level set $h^{-1}(c)$, for any $c$ in the image of $h$. There is an open set $U \subset M$ containing $L$ and a diffeomorphism $\Phi: U \rightarrow (-\delta, \delta) \times \TT^2$, for some positive number $\delta$, such that $h \circ \Phi^{-1}(\{t\} \times \TT^2)= \{c+t\}$. 
\end{proposition}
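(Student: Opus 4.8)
The plan is to show that the torus $L$ — an embedded $2$-torus by Step~1 — is either a regular level set component of $h$, in which case a tubular‑neighbourhood construction produces the chart directly, or a ``degenerate'' one, along which $h-c$ vanishes to higher order, in which case the same construction is applied to an analytic root of $h-c$.

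First I would establish this dichotomy. Since $h^{-1}(c)$ is a real analytic set it has only finitely many connected components, so $L$ is open in $h^{-1}(c)$; hence near every point of $L$ the level set is just the smooth hypersurface $L$, and locally $h-c=f_{1}^{m}u$ with $f_{1}$ a defining function of $L$, $u$ analytic, and $m\geq 1$ the order of vanishing of $h-c$ along $L$ (constant along $L$ by analyticity and connectedness). Then $dh|_{L}=m\,f_{1}^{m-1}u\,df_{1}|_{L}$ vanishes identically if $m\geq 2$ and only on $\{u=0\}\cap L$ if $m=1$. Now the set of points of $L$ where $h-c$ vanishes to order $>m$ — which equals $\{u=0\}\cap L=\Cr(h)\cap L$ when $m=1$ — is $X$-invariant (since $\cL_{X}dh=d(\cL_{X}h)=0$ the set $\Cr(h)$ is $X$-invariant, and the flow preserves vanishing orders), and it is a proper analytic subset of the torus $L$ invariant under the fixed‑point‑free flow $X|_{L}$, hence of dimension $\leq 1$; by Poincaré--Bendixson theory (Schwartz's theorem on minimal sets of $C^{2}$ surface flows) a nonempty such set would contain a periodic orbit. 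So, under the standing no-periodic-orbit assumption, either $dh\neq 0$ everywhere on $L$ (hence on a whole neighbourhood of $L$, by compactness), or $m\geq 2$ and $u$ is a unit near $L$.

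In the first case, fixing an auxiliary metric and putting $Y:=\grad h/|\grad h|^{2}$ on a neighbourhood of $L$ where $dh\neq0$ gives $dh(Y)\equiv1$, so $\tfrac{d}{dt}h(\psi^{t}_{Y}(x))=1$; for $\delta$ small the flow map $F(t,x)=\psi^{t}_{Y}(x)$ on $(-\delta,\delta)\times L$ is a diffeomorphism onto a neighbourhood $U$ of $L$ (a local diffeomorphism along $\{0\}\times L$ by transversality of $Y$ to $L$, injective by compactness of $L$), and $h\circ F(t,x)=c+t$, so $\Phi:=F^{-1}$ works, with $L\cong\TT^{2}$ by Step~1. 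In the second case one extracts an analytic root $\tilde h$ of $h-c$ near $L$ ($\tilde h=(h-c)^{1/m}$ when $m$ is odd; $\tilde h=f_{1}u^{1/m}$ for a consistent choice of local equation $f_{1}$ of the two‑sided $L$ — two‑sidedness holding because $M$ is orientable — when $m$ is even): then $\tilde h$ is a first integral of $X$ (as $X(\tilde h^{\,m})=0$ and $\tilde h$ is continuous), has $d\tilde h\neq0$ on $L$ and $\tilde h^{-1}(0)=L$ near $L$, and $\tilde h^{\,m}=\pm(h-c)$; applying the first case to $\tilde h$ gives a product chart near $L$ in which the connected components of the level sets of $h$ are exactly the slices $\{t\}\times\TT^{2}$, with $h\circ\Phi^{-1}(\{t\}\times\TT^{2})=c\pm t^{m}$.

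The step I expect to be the real obstacle is this last one when $m$ is even: then $L$ is a one‑parameter family of local extrema of $h$, every value $c+\varepsilon$ ($\varepsilon$ small) is realised by two nearby tori, and no \emph{smooth} chart can have $h\circ\Phi^{-1}(\{t\}\times\TT^{2})=c+t$ on the nose. To get the Proposition exactly as written one must exclude this, which should require a global argument rather than the local analysis above — for instance, cutting $M$ along two parallel nearby level tori exhibits it as assembled from pieces diffeomorphic to $\TT^{2}\times[-1,1]$, and one argues this is incompatible with $M$ being neither a torus bundle over $S^{1}$ nor carrying $X$ without periodic orbits. If instead only a smooth \emph{product foliation by tori} near $L$ is needed — which is what the subsequent step, that this foliation is a fibration, actually uses — then the analysis above already gives it, after reparametrising $h$ by $c\pm t^{m}$. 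Everything else reduces to the tubular‑neighbourhood construction of the regular case, applied to $h$ itself or to its analytic root $\tilde h$.
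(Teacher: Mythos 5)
Your proposal is correct and follows the same skeleton as the paper's proof: handle regular-type leaves by flowing along a transverse vector field normalized so that $dh$ of it equals $1$, and reduce degenerate leaves to that case by writing $h-c$ near $L$ as the $m$-th power of a function with nonvanishing differential, where the constancy of the transverse vanishing order is forced by flow-invariance plus the absence of zeros and periodic orbits. The difference is in how the factorization $h-c=f_1^m u$ with $u$ a unit is obtained. The paper first manufactures a \emph{global analytic} defining function $g$ of $L$ (Whitney approximation of a smooth defining function, Thom's isotopy theorem, Royden's analytic isotopy theorem, then the flow of the isotopy), applies the Weierstrass preparation theorem chart by chart, and glues the units with a partition of unity; you instead work with local analytic defining functions (available since Step 1 shows every point of $L$ has rank $1$), divide by them repeatedly, and globalize using only the two-sidedness of the torus $L$, which yields a global \emph{smooth} defining function and hence a smooth root $\tilde h$. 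Since the end product of the paper's construction is also only smooth (because of the partition of unity), your route loses nothing and avoids three nontrivial approximation/isotopy theorems. One caution: your parenthetical ``constant along $L$ by analyticity and connectedness'' is only correct for the divisibility order of $h-c$ by the ideal of $L$ (because $u|_L$ is analytic on the connected $L$ and not identically zero near any point); the pointwise degeneracy locus $\{u=0\}\cap L$ genuinely requires the dynamical argument, which you do supply (the paper argues directly that an invariant one-dimensional analytic set without zeros is a union of periodic orbits, rather than via Schwartz's theorem, but both work). Finally, your observation about even $m$ is well taken and applies equally to the paper's own proof: there the chart satisfies $h\circ\Phi^{-1}(\{t\}\times\TT^2)=\{c+t^k\}$, not $\{c+t\}$, so the Proposition as literally stated fails when $L$ is a leaf of local extrema; but, as you note, the subsequent Ehresmann/leaf-space argument only needs the local product foliation by torus leaves, which both proofs deliver.
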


Indeed, if that is the case, the leaves of the foliation by tori defined by $h$ are separable. Since $M$ is closed, the leaf space of the foliation is a 1-dimensional, compact closed Hausdorff space, i.e a circle. By Ehresmann fibration theorem, the surjective submersion from $M$ to the leaf space $\SS^1$ is a locally trivial fibration, so $M$ is torus bundle over the circle and Theorem \ref{main2} is proven.

The rest of the Section is dedicated to the proof of the above Proposition. 

\begin{proof}[Proof of Proposition \ref{levelsets2}]

Choose any connected leaf $L$ of the foliation defined by the level sets of $h$.

Suppose first that the leaf $L$ is a connected component of a level set
$h^{-1}(c)$ with $c$ a regular value. Then the proposition follows trivially: in a sufficiently
small neighborhood $U$ of $L$, the gradient of $h$ does not vanish. We can thus
define the vector field $S:=\frac{\nabla h}{dh(\nabla h)}$ whose flow $\phi_{S}^{t}$ verifies:
\[ 
\frac{ \partial }{ \partial t}    \, h\big(\phi_{S}^{t}(x)\big)\big\rvert_{t=s}=dh(S)\big(\phi_{S}^{s}(x)\big)=1,\,\,\,\,h\big(\phi_{S}^{0}(x)\big)=c
\]

for any point $x\in L$. So we have $h(\phi_{S}^{t}(x))=h(x)+t$. Moreover, any point $y \in U$ can be uniquely written as $y=\phi_{S}^{t}(x)$ for some $t \in (-\delta, \delta)$ and $x \in L$ (maybe at the expense of taking a slightly smaller open set $U$), so we define the diffeomorphism $\Phi: U \rightarrow (-\delta, \delta) \times \TT^{2}$ simply as $\Phi(\phi_{S}^{t}(x))=(t, x)$ for $x \in L$.

Now consider a leaf $L$ which is a connected component of a level set
$h^{-1}(c)$ with $c$ a critical value. Without loss of generality, we can assume that $c=0$. The key in this case is to prove:

\begin{lemma}\label{local}

On a small enough tubular neighborhood $N_{\delta}(L)$
of $L$ we can write $h=g^{k}\cdot u$, with $k$ a positive integer, $g$ an analytic function
such that $\nabla g\neq0$, and $u$ a smooth
function with $u\neq0$.
\end{lemma}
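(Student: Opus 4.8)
The plan is to work locally near a point $x \in L$ and then patch. Fix $x \in L$. By the stratification theory for the analytic set $L$ (which by Step~1 is a $2$-dimensional embedded torus), near $x$ the set $L$ is cut out by finitely many analytic functions; the key is that $h$ itself, being analytic and vanishing on $L$, is controlled by the defining equations. Concretely, I would first treat the generic situation: at a point $x \in L \setminus L^0$ where $\mathrm{rank}_L(x) = 1$, $L$ is a smooth hypersurface cut out by a single analytic $f$ with $d_x f \neq 0$, and $h$ vanishes to some finite order along $f^{-1}(0)$; the analytic division/Weierstrass-type argument gives $h = f^{k} u$ locally with $u$ analytic. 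The subtlety is on the singular set $L^0$, but $L^0$ consists of periodic orbits and zeroes of $X$ in general — and we have excluded both — so in fact $L = L \setminus L^0$ is a smooth surface everywhere, and at \emph{every} point of $L$ the rank is exactly $1$. This means $L$ is globally an analytic hypersurface, i.e.\ is everywhere locally cut out by a single analytic function with non-vanishing differential.

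Given that, the core of the argument is: since $L$ is a compact analytic hypersurface, the ideal of analytic function germs vanishing on $L$ is, near each point, principal, generated by a local defining function $g_x$ with $\nabla g_x \neq 0$; and $h$, vanishing on $L$, lies in some power of this ideal. The order of vanishing $k_x$ of $h$ along $L$ at $x$ is then a well-defined integer; I would show it is locally constant — hence constant, say equal to $k$, on the connected set $L$ — by noting that along the regular hypersurface $L$ the function $h$ restricted to a transverse analytic arc is a one-variable analytic function vanishing to order $k_x$, and this order cannot jump on a connected real-analytic manifold (the locus where it is $\geq k$ is analytic, and where it is $> k$ is a proper analytic subset, so by connectedness the generic order is attained on an open dense set; combined with lower semicontinuity and the fact that $h/g^{k}$ extends analytically across the jump locus, one gets constancy). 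Then locally $h = g_x^{k} u_x$ with $u_x$ analytic, $u_x \neq 0$ on $L$ (by the exact order $k$), hence $u_x \neq 0$ on a neighborhood of $L$.

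The remaining task is to globalize: the local factorizations $h = g_x^{k} u_x$ must be glued into a single $h = g^{k} u$ on a tubular neighborhood $N_\delta(L)$. On overlaps, $g_x^{k} u_x = g_y^{k} u_y$, and since $g_x, g_y$ are both local defining functions for the smooth hypersurface $L$ with non-vanishing gradient, $g_x = \lambda_{xy} g_y$ for an analytic non-vanishing $\lambda_{xy}$ (after possibly adjusting signs, but $L$ orientable as a torus ensures a coherent choice); hence $u_x = \lambda_{xy}^{-k} u_y$. One then patches using a partition of unity, or more cleanly observes that $g$ can be taken globally as, say, a signed-distance-type function to $L$ (using that $L$ is two-sided in $M$, which holds because $L$ is an orientable hypersurface in the orientable $M$): set $g$ to be any global analytic function with $g^{-1}(0) = L$ and $\nabla g \neq 0$ near $L$ — such a $g$ exists precisely because $L$ is a compact embedded analytic hypersurface that is two-sided. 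With this global $g$, define $u := h / g^{k}$ on $N_\delta(L) \setminus L$; the local analysis shows $u$ extends smoothly (indeed analytically) across $L$ with $u \neq 0$ there, so shrinking $\delta$ gives $u \neq 0$ throughout, completing the proof.

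The main obstacle I expect is the globalization of $g$ — specifically verifying that $L$ is two-sided so that a global defining function with non-vanishing gradient exists, and checking that the orders of vanishing $k_x$ genuinely agree across all of $L$ (including making the semicontinuity-plus-analyticity argument for constancy of $k$ airtight). The purely local factorization $h = g_x^{k} u_x$ near a smooth point is a standard consequence of the analytic structure, and the exclusion of $0$- and $1$-dimensional strata from Step~1 is what makes $L$ smooth and lets this local argument apply uniformly; so the real content is packaging these local statements into the clean global normal form asserted in the lemma, and that is where care with the topology of $L \subset M$ (orientability, two-sidedness, connectedness) is needed.
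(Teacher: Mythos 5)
Your skeleton (constant vanishing order along $L$, a global defining function $g$, local Weierstrass factorization, patching the local units) is the same as the paper's, but the step you flag as needing care --- constancy of the vanishing order $k_x$ --- is exactly where your proposed argument fails, and it fails for a reason that cannot be repaired by analyticity and connectedness alone. The order of vanishing of an analytic function along a connected smooth analytic hypersurface \emph{can} jump on a proper analytic subset: take local coordinates $(x,y,t)$ with $L=\{t=0\}$ and $h=t^{2}(x^{2}+t^{2})$. Then $h^{-1}(0)=\{t=0\}$ is a smooth connected analytic hypersurface, the vanishing order is $2$ at every point with $x\neq 0$, but it is $4$ along the curve $\{x=t=0\}$; correspondingly $u=h/t^{2}=x^{2}+t^{2}$ extends analytically across the jump locus but \emph{vanishes} there, so the desired factorization with $u\neq 0$ breaks down. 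Your "semicontinuity plus analytic extension" argument would conclude constancy in this example, which is false. The missing idea is dynamical, not analytic: the locus $S_{1}\subset L$ where the order exceeds its minimum $K$ is a proper analytic subset of $L$, hence of dimension $\leq 1$, and (because all jets of $h$ are transported by the flow of $X$) it is invariant under $\phi^{t}_{X}$; a flow-invariant set of dimension $\leq 1$ consists of zeroes and periodic orbits of $X$, both excluded by hypothesis, so $S_{1}=\emptyset$. This is the paper's Lemma on vanishing orders and it is the crucial input you are missing.

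A second, lesser gap: you assert that a global \emph{analytic} function $g$ with $g^{-1}(0)=L$ and $\nabla g\neq 0$ near $L$ exists "because $L$ is a compact embedded two-sided analytic hypersurface." A smooth such function is immediate from the trivial normal bundle, but upgrading it to an analytic one is not automatic and is where the paper spends real effort: it takes the smooth defining function $F$, approximates it in $C^{1}$ by an analytic $G$ (Whitney), uses Thom's isotopy theorem to get a smooth isotopy from $L$ to $L'=G^{-1}(0)$, invokes Royden's theorem to replace it by an analytic isotopy, and sets $g=G\circ\phi^{1}_{Y}$ on a tubular neighborhood. Your alternative of gluing the local $g_x$ with a partition of unity destroys analyticity of $g$ and with it the applicability of Weierstrass division, so some version of this approximation-plus-isotopy construction (or another genuine argument) is required. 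Note that the partition of unity is harmless only at the last stage, on the units $u_{i}$, which is why the final $u$ is merely smooth.
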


Indeed, assume the above Lemma for the moment: then, on a neighborhood of $L$, the function $q:=g\cdot u^{\frac{1}{k}}$
has the same level sets as $h$ and, moreover, $\nabla q \neq 0$ on a neighborhood of $L$. Arguing exactly as we did in the case of non-critical level sets with $q$ in the role of $h$ we get a locally
trivial foliation near $L$. This concludes the proof of Proposition \ref{levelsets2} \end{proof}

\begin{proof}[Proof of Lemma \ref{local}]

For any point $x \in L$, we define the vanishing order of $h$ at $x$ to be the smallest integer $n$ such that the $n$-jet of the function $h$ at $x$, $J^{n}_{x}(h)$, does not vanish. Of course if $x$ is a critical point, $n$ is greater than 1 ($h(x)=0$ and $\nabla h (x)=0$).

We begin by establishing the following fact:

\begin{lemma}\label{vanishingorder}
Let $L$ be a connected component of $h^{-1}(0)$ with $0$ a critical value. If $X$ does not vanish and has no periodic orbits, all points in $L$ are critical points of $h$ of the same vanishing order. 
\end{lemma}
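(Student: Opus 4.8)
The plan is to exploit the analytic structure of $L$ — by Step~1 it is an analytically embedded $2$-torus with $X$ tangent to it, so $X|_L$ is an analytic, nowhere-zero vector field on $\TT^2$ with no periodic orbit — together with the Poincar\'e--Bendixson theory for flows on the torus. First I would analyse how $h$ vanishes along $L$. Note $h\not\equiv0$ on any neighbourhood of $L$ (otherwise $h\equiv0$ on the connected manifold $M$ by analyticity, contradicting non-triviality). For $x\in L$, since $L$ is near $x$ a smooth analytic hypersurface, pick a local analytic defining function $\rho$ (so $L=\rho^{-1}(0)$ locally and $d\rho\neq0$) and let $\eta(x)$ be the largest integer with $\rho^{\eta(x)}$ dividing $h$ in the local analytic ring at $x$; this is independent of the choice of $\rho$, and $\eta(x)\geq1$ since $h|_L=0$. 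The key observation is that $\eta$ is \emph{locally constant}: if $h=\rho^{k}w$ near $x$ with $\rho\nmid w$, then $w|_L\not\equiv0$, and by the identity principle on the connected analytic surface $L$ the function $w|_L$ cannot vanish on any open subset, so $\rho\nmid w$ persists nearby and $\eta\equiv k$ there. Hence $\eta\equiv k$ on the connected set $L$ for a single integer $k\geq1$.

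From the local form $h=\rho^{k}w$ (with $w|_L\not\equiv0$) one reads off both the differential and the low-order jet of $h$ along $L$: for $x\in L$, $dh(x)=0$ if $k\geq2$ while $dh(x)=w(x)\,d\rho(x)$ if $k=1$, and the degree-$k$ part of the Taylor expansion of $h$ at $x$ equals $w(x)\ell^{k}$ with $\ell$ the linear part of $\rho$; hence the vanishing order satisfies $\nu(x)=k$ when $w(x)\neq0$ and $\nu(x)>k$ otherwise. Now comes the dynamical input: a nonsingular $C^{2}$ (a fortiori analytic) flow on $\TT^2$ without periodic orbits is minimal, i.e.\ has no nonempty proper closed invariant subset — this is where the analyticity of $X$ is essential, as it rules out Denjoy-type behaviour. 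Apply this to $\Cr(h)\cap L$: it is closed, it is flow-invariant (being the critical set of a first integral), and, when $k=1$, it is locally $\{w|_L=0\}$ and hence \emph{proper}; so $k=1$ forces $\Cr(h)\cap L=\emptyset$. Therefore, as soon as $L$ contains a critical point — the situation in which the statement has content, equivalently $\eta\equiv k\geq2$ — we get $k\geq2$ and every point of $L$ is critical.

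It remains to see the vanishing orders all agree, i.e.\ $\nu\equiv k$ on $L$. The set $Z:=\{x\in L:\nu(x)>k\}$ is closed in $L$ and proper (locally $Z=\{w|_L=0\}$ with $w|_L\not\equiv0$), and it is flow-invariant because $h\circ\phi^{t}_X=h$ implies that the order of vanishing of $h$ at a point is unchanged along orbits. A second application of the minimality principle on $\TT^2$ forces $Z=\emptyset$, that is $\nu\equiv k$ on $L$, which is the claim.

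I expect the main obstacle to be the bookkeeping behind the ``local $\Rightarrow$ global'' passages — verifying that $\eta$ is well defined and locally constant (via the identity principle on the analytic manifold $L$) and that $\nu$ is genuinely a pointwise invariant transported by the flow. Once these are in hand, the two appeals to the Poincar\'e--Bendixson/minimality theorem on $\TT^2$ do the real work and the remaining computations are routine.
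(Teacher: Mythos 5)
Your proof is correct, but it reaches the conclusion by a genuinely different route than the paper. The paper works with jets of $h$ in the ambient $3$-manifold: it sets $K$ to be the minimal vanishing order over $L$, observes that the locus $S_1\subset L$ where the $K$-jet vanishes is a \emph{proper analytic} subset of the $2$-dimensional analytic manifold $L$ and hence has dimension $\le 1$, notes that jets of a first integral are transported by the flow so $S_1$ is invariant, and then reuses the Step~1 dichotomy (a compact invariant set of dimension $\le 1$ must consist of zeros or periodic orbits) to force $S_1=\emptyset$. You instead factor $h=\rho^{k}w$ through a local analytic defining function of $L$, show the multiplicity $k$ is constant by the identity principle, identify the exceptional set as $\{w|_L=0\}$, and kill it by invoking minimality of a nonsingular $C^2$ flow on $\TT^2$ without periodic orbits (Denjoy/Schwartz). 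Both arguments hinge on producing a proper, closed, flow-invariant subset of $L$ and showing it must be empty; the paper only needs such a set to be \emph{analytic} (so its dimension drops and the Step~1 machinery applies), whereas you only need it to be \emph{closed} but must import the substantially deeper Denjoy--Schwartz theorem and the Step~1 conclusion that $L$ is a torus. Your route has two side benefits: it makes explicit the edge case where $L$ contains no critical point (the paper's statement is literally false when $K=1$, though harmlessly so), and the normal form $h=\rho^k w$ you derive is essentially the content of Lemma~\ref{local} minus the construction of a single global analytic defining function (which the paper obtains via Whitney approximation, Thom isotopy and Royden's theorem, and which your local $\rho$'s do not replace). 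One small inaccuracy: minimality here requires only $C^2$ regularity of $X|_L$, not analyticity, so analyticity of $X$ is not where the real work happens in your argument --- it is the analyticity of $h$ (and of $L$) that makes the factorization and the identity principle available.
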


\begin{proof}[Proof of Lemma \ref{vanishingorder}]
Set
\[
K:=\underset{x \in L}{\text{ min }} \,\{n \in \NN \,\text{ such that\,}J_{x}^{n}(h)\neq 0\}\,,
\]
and define the following two sets partitioning $L$: 
\[
S_{0}:=\{x \in L,\, J_{x}^{K}(h)\neq0 \}
\]
\[
S_{1}:=\{x\,\in\,L,\,J_{x}^{K}(h)=0\} 
\,,
\]

that is, the points in $S_{0}$ are those with minimal vanishing
order, and $S_{1}=L\setminus S_{0}$. 

Notice that $S_{1}$ is an analytic subset of $L$, since it is defined as the zero set of the $k$-jet of $h$; and it cannot be the whole $L$ because by definition $S_0$ is not empty. Thus the codimension of $S_1$ in $M$ must be at least 2,
that is $S_{1}$ must consist of points and curves. 

But it is easy to check that all $n$-jets of $h$ are preserved
by the flow of $X$ (to see this, consider the matter in a flow box of the vector field), so that $S_{1}$
is invariant and thus it consists of zeroes or periodic orbits of $X$. Since this is excluded by our assumptions, we conclude that $S_1$ is empty and $L=S_{0}$, i.e, $h$ has the same vanishing order $K$ for all $x\in L$. \end{proof}

Now, the leaf $L$ has trivial normal bundle, so there
is a smooth function $F$ in $M$ such that $F^{-1}(0)=L$
and $\nabla F |_{L}\neq0$: indeed, we can simply define the function $F(x,\,t):=t$ for coordinates $(x,\,t)$ on a trivialization of a  tubular neighborhood $N_{\delta}(L)\simeq L\times (-\delta, \delta)$; and extend $F$ smoothly to the whole manifold. Observe that although $L=F^{-1}(0)$ is an analytic submanifold, we cannot ensure that $F$ is analytic.

By Whitney's
approximation theorem, we can find analytic functions arbitrarily
close to $F$ in any $C^{k}$ norm. Consider thus an analytic function $G$ very close to $F$ in the $C^{1}$ norm. Thom's isotopy theorem \cite{AR} ensures that, provided we take $G$ close enough, the level set $L'=G^{-1}(0)$
is diffeomorphic to $L$ and, moreover, that there is a \emph{smooth} family of isotopic embeddings
\[
e_{t}:\mathbb{T}^{2}\rightarrow M
\]
with $e_{0}(\TT^2)=L$ and $e_{1}(\TT^2)=L'$.

But since both $L$ and $L'$ are analytic, we can take the family of isotopic
embeddings $e_t$ to be analytic as well
(in the sense that the images are analytic submanifolds and the dependence
on $t$ is analytic) by a theorem due to Royden \cite{ROYD} (which precisely states that
given a smooth isotopy of embeddings between two analytic submanifolds we can find an analytic isotopy
of embeddings).

We define now a time-dependent vector field associated with the isotopy $e_t$:
\[
Y_s(e_s(q)):=\frac{\partial e_{t}}{\partial t} \Big\rvert_{t=s}(q)
\]
for $q \in \TT^2$. Note that for each $s$, $Y_s$ is only defined on the
embedded tori $e_s(\TT^2) \subset M$. It is nonetheless straightforward to extend it smoothly to a time-dependent vector field $Y_t$ on the whole $M$. In doing so, $Y_t$ cannot be ensured to be analytic anymore, but we can keep the extension analytic inside a small enough neighborhood of the original domain of definition. As we are going to see, this is enough for our purposes.

The non-autonomous flow $\phi^{t}_{Y_t}$ on $M$ satisfies, for any $p \in L$,
\[
\phi^{t}_{Y_t}(p)=e_t(e^{-1}_0(p))
\]
so that
\[
\phi_{Y_1}^{1}(L)=L'
\,
\]

and furthermore, $\phi_{Y_1}^{1}: M\rightarrow M$ is analytic on a small enough tubular neighborhood of $L$ that we denote by $N_{\delta}(L)$.

Define now the function
\[ 
g:=G\circ\phi_{Y}^{1}|_{N_{\delta}(L)} : N_{\delta}(L) \rightarrow \RR
\]

which is analytic as well, since $G$ and $\phi_{Y}^{1}|_{N_{\delta}(L)}$ are. 

We have that $L=g^{-1}(0)$
and $dg|_{N_{\delta}(L)}\neq0$. Now, since by Lemma \ref{vanishingorder} the function $h$ has the same vanishing order at all points $x \in L$, the Weierstrass Preparation Theorem (see e.g \cite{KP}, Theorem 6.3.1) readily implies that, on a neighborhood $V_x$ of any point $x \in L$, we can write $h=g^{K}\cdot u_{V_x}$, with $u_{V_x}$ a nonvanishing analytic function on $V_x$.

We take now a partition of unity $(\rho_{i},\,V_{i})$ of $N_{\delta}(L)$,
such that on each $V_{i}$ we have $h=g^{K}\cdot u_{i}$. Defining
\[
u=\underset{i}{\sum}\rho_{i}u_{i}
\]
we finally have
\[
h=\underset{i}{\sum}\rho_{i}g^{k}u_{i}=g^{k}\underset{i}{\sum}\rho_{i}u_{i}=g^{k}\cdot u\,
\]

Because of our use of a partition of unity, $u$ is not analytic, but it is clearly nonvanishing on $N_{\delta}(L)$. This finishes the proof of Lemma \ref{local}.\end{proof}

\section{Proof of Theorem \ref{main1}}\label{pmain1}

Any nonvanishing solution of the Euler equations is of one of the following four types:

\begin{enumerate}
\item $\curl X=0$.
\item $\curl X= \lambda X$ with some constant $\lambda \neq 0$.
\item $\curl X=fX$ with $f$ not constant.
\item $X \times \curl X=\nabla B$ with $\nabla B$ not identically zero. 
\end{enumerate}

We will prove Theorem \ref{main1} case by case. We note that the only cases where the analyticity assumptions will be needed are cases (iii) and (iv). 
\subsection{Case (i)} We begin by recalling a result due to Tischler \cite{TI}:
 
\begin{theorem}[Tischler \cite{TI}]\label{Tis}
Let M be
a closed manifold. If M has a nowhere vanishing closed 1-form
$\alpha$, then for any $\epsilon>0$ there is a surjective submersion $p: M \rightarrow \SS^1$ and an integer $n$ such that the 1-form $p^{*}d\theta$ (where $d\theta$ denotes the standard  length form on $\mathbb{S}^{1}$) satisfies $|n\alpha(Y(x))-p^{*}d\theta(Y(x))|_{C^0(M)} \leq \epsilon n|\alpha(Y(x))|$, for any vector field $Y$ at any point $x$.
\end{theorem}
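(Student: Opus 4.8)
The plan is to show that $\alpha$ can be perturbed, within any prescribed $C^{0}$-neighbourhood, to a closed $1$-form $\alpha'$ all of whose periods over integral cycles are rational, and then to obtain $p$ by integrating a suitable integer multiple of $\alpha'$; the quantitative estimate then comes for free from compactness. We may assume $M$ is connected (otherwise argue component by component).

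First I would set up the cohomological perturbation. Choose closed $1$-forms $\omega_{1},\dots,\omega_{b}$ whose de Rham classes are the images of a $\ZZ$-basis of $H^{1}(M;\ZZ)$ modulo torsion; by de Rham's theorem they can be taken with all periods over integral $1$-cycles equal to integers, and they form an $\RR$-basis of $H^{1}_{\mathrm{dR}}(M)$. Since $[\alpha]=\sum_{i}c_{i}[\omega_{i}]$ for some real $c_{i}$, we have $\alpha=\sum_{i}c_{i}\omega_{i}+df$ for a smooth function $f$. Because $\alpha$ is nowhere zero on the compact manifold $M$, there is $\kappa>0$ with $|\alpha(Y(x))|\geq \kappa\,|Y(x)|$ for every vector field $Y$ and every $x$. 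Now replace each $c_{i}$ by a rational $q_{i}$ with $|c_{i}-q_{i}|$ so small that $\alpha':=\sum_{i}q_{i}\omega_{i}+df$ satisfies $\|\alpha-\alpha'\|_{C^{0}(M)}\leq \epsilon\kappa$; this perturbs only finitely many real constants, so $\alpha'$ is $C^{\infty}$-close to $\alpha$, in particular still nowhere vanishing, and all of its periods lie in $\tfrac1N\ZZ$ for some positive integer $N$.

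Next I would build the submersion. Put $n:=N$, so that $n\alpha'$ is a nowhere-vanishing closed $1$-form with all periods in $\ZZ$. Fix $x_{0}\in M$ and define $p:M\to\RR/\ZZ=\SS^{1}$ by letting $p(x)$ be the class of $\int_{x_{0}}^{x}n\alpha'$; this is well defined because the periods of $n\alpha'$ are integers, it is smooth, and $p^{*}d\theta=n\alpha'$, where $d\theta$ is the form of total integral $1$ descending from $dt$ on $\RR$. Since $n\alpha'$ is nowhere zero, $dp$ is everywhere nonzero, so $p$ is a submersion, hence an open map; as $M$ is compact and connected, $p(M)$ is open, closed and nonempty in $\SS^{1}$, so $p$ is surjective. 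Finally, for every vector field $Y$ and every $x$,
\[
|n\alpha(Y(x))-p^{*}d\theta(Y(x))|=n\,|(\alpha-\alpha')(Y(x))|\leq n\,\|\alpha-\alpha'\|_{C^{0}(M)}\,|Y(x)|\leq \epsilon\,\kappa\,n\,|Y(x)|\leq \epsilon\,n\,|\alpha(Y(x))|,
\]
which is the asserted inequality.

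The only delicate point — and the one I would flag as the crux — is ensuring that the period-rationalising perturbation preserves the nowhere-vanishing property. This is exactly why one must perturb the cohomology class only through the finitely many coefficients $c_{i}$, keeping the fixed representatives $\omega_{i}$ and the exact part $df$ untouched: then the change in $\alpha$ is controlled in every $C^{k}$ norm, not merely in cohomology, and a $C^{0}$-small change of a nowhere-vanishing form on a compact manifold is again nowhere vanishing. Everything else is routine de Rham theory together with the definition of the period map. One could equally replace the choice of basis by Hodge theory, using that the harmonic representative depends continuously and linearly on the cohomology class and writing $\alpha=H[\alpha]+df$; the bookkeeping is the same.
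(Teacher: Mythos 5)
The paper does not prove this statement --- it is quoted from Tischler's paper --- so there is no internal proof to compare against; your argument is the standard one (rationalise the periods of $\alpha$ in a fixed integral basis of $H^1$, then integrate an integer multiple of the perturbed form to get the map to $\SS^1$), and everything up to the last display is correct: $\alpha'=\sum_i q_i\omega_i+df$ is $C^\infty$-close to $\alpha$, hence still nowhere vanishing, its periods lie in $\tfrac1N\ZZ$, and $p(x)=\int_{x_0}^x n\alpha' \bmod \ZZ$ is a surjective submersion with $p^*d\theta=n\alpha'$.

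The gap is the assertion that there is $\kappa>0$ with $|\alpha(Y(x))|\geq\kappa\,|Y(x)|$ for every $Y$ and $x$. This is false on any manifold of dimension at least two: a nowhere-vanishing $1$-form still has a hyperplane kernel $\ker\alpha_x\subset T_xM$ at each point, so there are unit vectors on which $\alpha$ vanishes. Your final chain of inequalities therefore breaks at the step $\epsilon\kappa n|Y(x)|\leq\epsilon n|\alpha(Y(x))|$. In fact the inequality as literally stated cannot come out of this construction: for $Y(x)\in\ker\alpha_x$ it would force $(p^*d\theta)(Y(x))=0$, i.e.\ $p^*d\theta$ pointwise proportional to $\alpha$, whereas your error term $n\alpha-p^*d\theta=n\sum_i(c_i-q_i)\omega_i$ has no reason to annihilate $\ker\alpha_x$. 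What your construction genuinely delivers is the $C^0$ bound $\|n\alpha-p^*d\theta\|_{C^0(M)}\leq n\delta$ for any prescribed $\delta>0$, and this is all the paper actually uses: the theorem is applied only with $Y=X$ and $\alpha=i_Xg$, where $\alpha(X)=|X|^2$ is bounded below on the compact $M$, so choosing $\delta\leq\epsilon\min_M|X|$ gives $|n\alpha(X)-p^*d\theta(X)|\leq\epsilon n\,\alpha(X)$ and hence the transversality $i_Xp^*d\theta>0$. The correct fix is therefore to state the conclusion as a $C^0$ estimate on the $1$-form $n\alpha-p^*d\theta$ (or to restrict the relative inequality to vector fields on which $|\alpha(Y)|$ is bounded below), rather than to invoke a nonexistent lower bound $|\alpha(Y)|\geq\kappa|Y|$.
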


Since $\curl X=0$ and $X$ has no zeroes, the 1-form $\lambda=i_{X}g$
is closed and nowhere vanishing. By Tischler's theorem, $M$
fibers over $\mathbb{S}^{1}$ and since $M$ is compact, the fibration $p$ is
proper, so it defines a fiber bundle. 

Note that by taking $\epsilon$ in the statement of Theorem \ref{Tis} smaller than 1 we ensure that $i_{X}\ p^{*} d \theta>0$, so the vector field $X$ is transverse to the fibers $p^{-1}(\theta)$.

We want to prove that, if $X$ does not have periodic
orbits, the fibers are tori. Take a fiber $\Sigma_{\theta}:=p^{-1}(\theta)$. Since $p^{*}d\theta(X)>0$, all the flow lines come back
to the fiber. We can then define the Poincar\'e recursion map of $X$ in $\Sigma_{\theta}$, which is a diffeomorphism $\psi_{X}:\Sigma_{\theta}\rightarrow\Sigma_{\theta}$ taking any point in the fiber to the first point in its forward trajectory to belong to the fiber again.

A periodic orbit of $X$ corresponds either to a fixed point
of the Poincar\'e recursion map $\psi_{X}$ or, more generally, to a \emph{periodic
point} of it, that is, a fixed point of the iterated
map $\psi_{X}^{k}=\psi_{X}\circ...\circ\psi_{X}$ for some integer
$k>0$.

So the proof of Theorem \ref{main1} in this case reduces to showing that if $\psi_{X}$ has no periodic
points, $\Sigma_{\theta}$ is a torus. This follows from a classical result of Fuller \cite{FU}, but in any case we will present a proof here. 

Define the Lefschetz zeta function of $\psi_X$ as
\[
Z_{\psi_X}(s):= \text{exp }\bigg(\sum_{k=1}^{\infty} \frac{\Lambda(\psi^{k}_{X})}{k} s^{k}\bigg)
\]

where $\Lambda(\psi_{X}^{k})$ denotes the Lefschetz number of $\psi^{k}_{X}$, 
\[
\Lambda(\psi_{X}^{k})=
\text{Tr}\big((\psi^{k}_{X})_{*0}\big)-\text{Tr}\big((\psi^{k}_{X})_{*1}\big)+\text{Tr}\big((\psi^{k}_{X})_{*2}\big)\,,
\]
and $(\psi^{k}_{X})_{*i}: H_{i}(\Sigma_{\theta}, \QQ) \rightarrow H_{i}(\Sigma_{\theta}, \QQ)$ is the linear map induced by $\psi^{k}_{X}$ on the $i$-th homology group of the fiber $\Sigma_{\theta}$.

Observe that by the Lefschetz fixed point theorem, if $\Lambda(\psi^{k}_{X}) \neq 0$ then $\psi_{X}$ has a periodic point. So if the map $\psi_{X}$ has no periodic points, all Lefschetz numbers are zero and we must have 
\[
Z_{\psi_X}(s)=1 \,.
\]
We now claim that the above identity is only possible if $\Sigma_{\theta}$ is a torus. Indeed, the Lefschetz zeta function can be rewritten as
\[
Z_{\psi_X}(s)=\frac{\text{exp}\bigg(-\sum_{k=1}^{\infty} \frac{s^{k}}{k} \text{ Tr}\big((\psi^{k}_{X})_{*1}\big) \bigg)}{\text{exp}\bigg(-\sum_{k=1}^{\infty} \frac{s^{k}}{k} \text{ Tr}\big((\psi^{k}_{X})_{*0}\big) \bigg)\text{exp}\bigg(-\sum_{k=1}^{\infty} \frac{s^{k}}{k} \text{ Tr}\big((\psi^{k}_{X})_{*2}\big) \bigg)}\,.
\]
Recall that for a square matrix $A$ one has
\[
\text{det}(I+A)=\sum_{n=0}^{\infty} \frac{1}{n!}\bigg(-\sum_{k=1}^{\infty} \frac{(-1)^{k}}{k} \text{Tr} A^{k}\bigg)^{n}
\]
where by $I$ we denote the identity matrix. Thus, we have
\[
\text{exp}\bigg(-\sum_{k=1}^{\infty} \frac{s^{k}}{k} \text{ Tr }\big((\psi^{k}_{X})_{*i}\big)\bigg)=\text{det}(I-s (\psi^{k}_{X})_{*i})
\]
so the Lefschetz zeta function reads
\begin{equation}\label{zeta}
Z_{\psi_X}(s)=\frac{\text{det}\big(I-s (\psi_{X})_{*1}\big)}{\text{det}\big(I-s (\psi_{X})_{*0}\big)\text{ det}\big(I-s (\psi_{X})_{*2}\big)}\,.
\end{equation}
In principle, $\text{det}(I-s (\psi_{X})_{*i})$ is a polynomial of degree $n_i \leq b_i$, where by $b_i$ we denote the $i$-th Betti number of $\Sigma_{\theta}$, $b_i:=\text{dim }(H_i(\Sigma_{\theta}, \QQ))$. But in fact, we have that the degree is exactly equal to the Betti number. To see this, note that
\[
\text{det}\big(I-s (\psi_{X})_{*i}\big)=s^{b_i} \text{det}\bigg(\frac{1}{s}I-(\psi_{X})_{*i}\bigg)=s^{b_i}p\bigg(\frac{1}{s}\bigg)
\]
where $p(\lambda)$ denotes the characteristic polynomial of $(\psi_{X})_{*i}$, that is, the polynomial whose zeroes are the eigenvalues of $(\psi_{X})_{*i}$. If there is no term of degree $b_i$ in $\text{det }(I-s (\psi_{X})_{*i})$, then there is no term of degree zero in $p(\lambda)$, and thus $\lambda=0$ is an eigenvalue of $(\psi_{X})_{*i}$. But this is impossible, because $\psi_{X}$ is a diffeomorphism, so $(\psi_{X})_{*i}$ is an isomorphism. So $n_i=b_i$.

Now, we have that $b_0=b_2=1$ and $b_1=2g$, where $g$ is the genus of $\Sigma_{\theta}$. Thus, the denominator in Eq. \eqref{zeta} is a polynomial of degree 2, while the numerator has degree $2g$. So for $Z_{\psi_{X}}(s)$ to be identically $1$ we must have $g=1$, i.e, $\Sigma_{\theta}$ must be a torus.

\subsection{Case (ii)} If $X$ satisfies the equation $\curl X= \lambda X$, it is immediate to see that $\alpha=i_X g$ is a contact form and 
\[
X':= \frac{X}{\alpha(X)}
\]
is the corresponding Reeb vector field of $\alpha$. Taubes' solution of the Weinstein conjecture \cite{TA} ensures that $X'$ has a periodic orbit, hence so does $X$. 

\subsection{Cases (iii) and (iv)} These two cases can be analyzed simultaneously,
as they have a key feature in common: the field $X$ has a non-trivial first integral, $f$ in case (iii) and $B$ in case (iv).

By the assumptions in Theorem \ref{main1}, $f$ and $B$ are  analytic. We can then apply Theorem \ref{main2} to conclude that if $X$ has no periodic orbits, $M$ is a torus bundle over the circle.

\section*{Acknowledgements}

The author thanks Daniel Peralta-Salas for his many valuable suggestions and advice. He is also grateful to the Max Planck Institute for Mathematics for its financial support and hospitality.


\begin{thebibliography}{99}\frenchspacing


\bibitem{AR} R. Abraham, J. Robbin, 
\emph{Transversal mappings
and flows}. Benjamin, New York, 1967.

\bibitem{AKh}

V.I. Arnold, B.A. Khesin, \emph{Topological Methods in Hydrodynamics}. Springer-Verlag, New York, 1998.


\bibitem{BW}

 
F. Bruhat, H. Whitney, 
Quelques propri\'et\'es fondamentales des ensembles analytiques-r\'eels. Comment. Math. Helv. 33 (1959) 132--160. 

\bibitem{CV} K. Cieliebak and E. Volkov,
A note on the stationary Euler equations of hydrodynamics. Ergodic Theory Dynam. Systems 37 (2017) 454--480. 

\bibitem{DH}

T. Dombre,  U. Frisch, J. M. Greene, M. H\'enon, A. Mehr, A. M. Soward, Chaotic streamlines in the ABC flows. J. Fluid Mech. 167 (1986) 353--391.

\bibitem{EP}
A. Enciso, D. Peralta-Salas, Existence of knotted vortex tubes in
steady Euler flows. Acta Math. 214 (2015) 61--134.


\bibitem{EPT}
A. Enciso, D. Peralta-Salas, F. Torres de Lizaur, Knotted structures in high-energy Beltrami fields on the torus and the sphere. Ann. Sci. \'Ec. Norm. Sup. 50 (2017) 4, 995--1016.

\bibitem{EG1} 

J. Etnyre, R. Ghrist, Contact topology and hydrodynamics I. Beltrami fields and the Seifert conjecture. Nonlinearity 13 (2000) 441--458.


\bibitem{FO}

 
A. T. Fomenko,
\emph{Integrability and nonintegrability in geometry and mechanics}. 
Translated from the Russian by M. V. Tsaplina. Mathematics and its Applications 31, Kluwer Academic Publishers Group, Dordrecht, 1988.

\bibitem{GO}

C. Godbillon, Syst\`emes dynamiques sur les surfaces. Publications IRMA Strasbourg, S\'eries de Math\'ematiques Pures et Appliqu\'ees 66 (1979).

\bibitem{H}
M. H\'enon, Sur la topologie des lignes de courant dans un cas particulier. C. R. Acad. Sci. Paris 262 (1966) 312--314.

\bibitem{HT}
M. L. Hutchings, C. H. Taubes,
The Weinstein conjecture for stable Hamiltonian structures. 
Geom. Topol. 13 (2009) 901--941.

\bibitem{Kh}
 
B. A. Khesin, Topological fluid dynamics. Notices Amer. Math. Soc. 52 (2005) 9--19.

\bibitem{KP}


S. G. Krantz, H. R. Parks, \emph{A primer of real analytic functions}. 
Birkh\"auser, Boston, 2002.

\bibitem{LO}

S. \L ojasiewicz, Ensembles semi-analytiques, Inst. Hautes \'Etudes Sci., Bures-sur-Yvette, 1964.


\bibitem{LM}

D. B. Massey, D. T L\^e, 
Notes on real and complex analytic and semianalytic singularities. \emph{Singularities in geometry and topology}, 81--126, World Sci. Publ., Hackensack, NJ, 2007.



\bibitem{FU} 

F.B. Fuller, The existence of periodic points. Ann. of Math. 57 (1953) 229--230.


\bibitem{P}

D. Peralta-Salas, Selected topics on the topology of ideal fluid flows. Int. J. Geom. Meth. Mod. Phys. 13 (2016) 1--23.

\bibitem{RE2}

A. Rechtman, Use and disuse of plugs in foliations. PhD Thesis, ENS Lyon, 2009.
%
\bibitem{RE}

A. Rechtman, Existence of periodic orbits for geodesible vector fields on closed 3-manifolds. Ergodic Theory Dynam. Systems 30  (2010) 1817--1841.

\bibitem{ROYD}

H. L. Royden,
The analytic approximation of differentiable mappings.
Math. Ann. 139 (1960)  171--179. 


\bibitem{TA}

C. H. Taubes, The Seiberg-Witten equations and the Weinstein conjecture. Geom. Topol. 11 (2007) 2117--2202.

\bibitem{TI}

D. Tischler, On bering certain foliated manifolds over $\mathbb{S}^{1}$. Topology 9 (1970) 153--154.

\end{thebibliography}
\end{document}